\documentclass[reqno]{amsart}

\newtheorem{Theorem}{Theorem}[section]
\newtheorem{Definition}[Theorem]{Definition}

\newtheorem{Lemma}[Theorem]{Lemma}
\newtheorem{Proposition}[Theorem]{Proposition}



\begin{document}
\title[Lifespan est. for semirelativistic eqn. with critical nonlinearity]%
{Note on the lifespan estimate of solutions
for non-gauge invariant semilinear massless semirelativistic equations
with some scaling critical nonlinearity}

\author[K. Fujiwara]{Kazumasa Fujiwara}
\address{Graduate School of Mathematics, Nagoya University,
Furocho, Chikusaku, Nagoya, 464-8602, Japan}
\email{fujiwara.kazumasa@math.nagoya-u.ac.jp}
\keywords{semirelativistic equation, lifespan estimate, non-gauge invariant power-type nonlinearity}
\subjclass[2010]{35Q40, 35A01}
\date{}

\maketitle

\begin{abstract}
In this manuscript,
in the $L^1$ scaling critical case,
a lifespan estimate of solutions to the Cauchy problem for non-gauge invariant semilinear semirelativistic equations is considered.
The lifespan estimate is given by the modified test function method with a fractional Laplace operator.
The main obstacle to obtaining the lifespan estimate is the non-locality of the fractional Laplace operator.
To treat the non-locality, special test functions are introduced.
\end{abstract}

\section{Introduction}
In this manuscript,
we consider the following Cauchy problem for (massless) semilinear semirelativistic equations:
	\begin{align}
	\begin{cases}
	i \partial_t u + (-\Delta)^{1/2} u = |u|^p,
	&t \in \lbrack 0,T) , \quad x \in \mathbb R^n,\\
	u(0)=u_0,
	&x \in \mathbb R^n,
	\end{cases}
	\label{eq:1}
	\end{align}
where $p > 1$ and $n$ is a positive integer.
The fractional Laplace operator $(-\Delta)^{1/2}$ is defined by a Fourier multiplier
with symbol $|\xi|$: $(-\Delta)^{1/2} = \mathcal F^{-1} |\xi| \mathcal F$,
where $\mathcal F$ denotes the usual Fourier transform.
The aim of this manuscript is to show some a priori lifespan estimates of
non-global weak solutions with the scaling critical power $p=(n+1)/n$ for some initial data.

Semirelativistic equations are also known as half-wave or fractional Schr\"odinger equations.
Indeed, a semirelativistic equation is derived by a formal factorization of a wave equation.
On the other hand,
it is known that semirelativistic and wave equations do not share the property of solutions.
Specifically, the differential operator $(-\Delta)^{1/2}$ is non-local and
makes it difficult to apply the analysis of the classical wave and Schr\"odinger equations.

It may be directly seen that $(-\Delta)^{1/2}$ has the same scaling property as $\nabla$.
Namely, the identity
	\[
	(-\Delta)^{1/2} (f(\lambda \cdot)) (x) = \lambda (-\Delta)^{1/2} f(\lambda x)
	\]
holds for any positive constant $\lambda$.
Therefore, $\lambda^{1/(p-1)} u (\lambda t, \lambda x)$ satisfies \eqref{eq:1} for any $\lambda >0$
with initial data $\lambda^{1/(p-1)} u_0 (\lambda x)$ as long as $(u,u_0)$ satisfies \eqref{eq:1}.
For $q \geq 1$, we call $p=(n+q)/n$ a critical power in the $L^q(\mathbb R^n)$ framework
because the identities
	\[
	\|\lambda^{1/(p-1)} u_0(\lambda \cdot)\|_{L^q(\mathbb R^n)}
	= \|\lambda^{n/q} u_0(\lambda \cdot)\|_{L^q(\mathbb R^n)}
	= \|u_0 \|_{L^q(\mathbb R^n)}
	\]
hold for any $\lambda >0$.
We also say that the power $p$ is subcritical if $p$ is less than the corresponding critical power.
By the analogy of heat, damped wave, and Schr\"odinger equations,
it is expected that
there exist no global weak solutions to \eqref{eq:1} for some integrable initial data
when $p \leq (n+1)/n$: in the $L^1(\mathbb R^n)$ scaling critical and subcritical cases.
For details of the nonexistence results for the equations above,
we refer the reader to the examples in \cite{IW2013,IS2019,MP1998,Z1999}.
Here, the global weak solutions to \eqref{eq:1} are defined as follows:
\begin{Definition}
Let $u_0 \in L^1 \cap L^2(\mathbb R^n)$.
For $T > 0$, we say that $u$ is a weak solution to \eqref{eq:1} on $[0,T)$,
if $u$ belongs to
$L_\mathrm{loc}^1(0, T ; L^2(\mathbb R^n)) \cap L_\mathrm{loc}^1(0, T ; L^p(\mathbb R^n))$
and the following identity
	\begin{align}
	&\int_0^\infty \int_{\mathbb R^n} u(t,x)
	\overline{(i \partial_t + (- \Delta)^{1/2}) \phi (t,x) } dx \thinspace dt
	\nonumber\\
	&= i \int_{\mathbb R^n} u_0(x) \overline{\phi (0,x)} dx + \int_0^\infty \int_{\mathbb R^n} |u(t,x)|^p \overline{ \phi(t,x)} dx \thinspace dt
	\label{eq:2}
	\end{align}
holds for any $\phi \in C^\infty(\mathbb R^{n+1})$
satisfying $\phi, \partial_t \phi, (-\Delta)^{1/2} \phi \in L^1 \cap L^\infty(\mathbb R^{n+1})$ and
	\[
	\mathrm{supp}\thinspace \phi \subset (-\infty,T\rbrack \times \mathbb R^n.
	\]
Moreover, we define the maximal existence time $T_m=T_m(u_0)$ as
	\[
	T_m
	= \sup \{T > 0 \ ; \ \mbox{There is a weak solution for \eqref{eq:1} on $[0,T)$.}\}.
	\]
We also say that $u$ is a global weak solution to \eqref{eq:1} when $T_m=\infty$.
\end{Definition}

Indeed, in \cite{FO2015},
it is shown that when $n=1$ in the $L^1(\mathbb R)$ scaling critical and subcritical cases,
there are no global solutions with some initial data as follows:
\begin{Proposition}
\label{Proposition:1.2}
If $n=1$, $1 < p \leq 2$, and $u_0 \in L^1 \cap L^2 (\mathbb R)$ satisfying that
	\begin{align}
	\mathrm{Re} (u_0) \equiv 0,
	\quad
	- \mathrm{Im} \bigg( \int_{\mathbb R} u_0(x) dx \bigg) > 0,
	\label{eq:3}
	\end{align}
then there are no global weak solutions;
namely, there are no weak solutions on $[0,T)$ for $T$ big enough.
\end{Proposition}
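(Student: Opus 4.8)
The plan is to run a modified test function argument. Suppose, toward a contradiction, that \eqref{eq:1} admits a global weak solution $u$. Into the weak formulation \eqref{eq:2} I would insert the real, nonnegative, product-type test function $\phi(t,x)=\chi(t/S)^{2p'}\psi_R(x)$, where $p'=p/(p-1)$, $\chi\in C_c^\infty(\mathbb R)$ is a standard cutoff with $\chi\equiv 1$ near the origin, and $\psi_R(x)=(1+(x/R)^2)^{-1}$ is a dilate of the one-dimensional Poisson kernel. Taking the real part of \eqref{eq:2} and using $\mathrm{Re}(u_0)\equiv 0$, the initial datum contributes the \emph{real} quantity $c_0(R):=-\int_{\mathbb R}\mathrm{Im}(u_0)\,\psi_R\,dx$, which by \eqref{eq:3} and dominated convergence satisfies $c_0(R)\to c_0:=-\mathrm{Im}\big(\int_{\mathbb R}u_0\,dx\big)>0$ as $R\to\infty$. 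What remains is the identity
\[
\int_0^\infty\int_{\mathbb R}|u|^p\phi\,dx\,dt + c_0(R)=\int_0^\infty\int_{\mathbb R}\mathrm{Im}(u)\,\partial_t\phi\,dx\,dt+\int_0^\infty\int_{\mathbb R}\mathrm{Re}(u)\,(-\Delta)^{1/2}\phi\,dx\,dt,
\]
and, using $|\mathrm{Re}(u)|,|\mathrm{Im}(u)|\le|u|$ and Hölder's inequality with the weight $\phi^{1/p}$, this yields $Y+c_0(R)\le A_{\mathrm{time}}\big(\int_{\{\chi'(t/S)\neq 0\}}|u|^p\phi\big)^{1/p}+A_{\mathrm{nl}}\,Y^{1/p}$, where $Y:=\int_0^\infty\int_{\mathbb R}|u|^p\phi$ and $A_{\mathrm{time}},A_{\mathrm{nl}}$ depend only on $p$, $\chi$, $R$, $S$.

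I expect the genuine obstacle to be the estimate of $A_{\mathrm{nl}}$: unlike in the local (heat or damped-wave) setting, $(-\Delta)^{1/2}\phi$ is supported on all of $\mathbb R$ rather than on the transition region of the spatial cutoff, so the usual device of restricting the error integral to that region is unavailable. This is precisely why $\psi_R$ is taken to be the Poisson kernel: since the harmonic extension of $P_y$ to a half-plane is again a Poisson kernel, one has $(-\Delta)^{1/2}P_y=-\partial_y P_y$ explicitly, and after dilation this gives the pointwise bound $|(-\Delta)^{1/2}\psi_R(x)|\le R^{-1}\psi_R(x)$ for every $x\in\mathbb R$. Consequently $\int_0^\infty\int_{\mathbb R}\phi^{-p'/p}|(-\Delta)^{1/2}\phi|^{p'}\,dx\,dt\le C\,S\,R^{1-p'}$, that is $A_{\mathrm{nl}}\le C\,S^{1/p'}R^{-1/p}$; a parallel (and entirely elementary) computation gives $A_{\mathrm{time}}\le C\,S^{-1/p}R^{1/p'}$, the accompanying factor of $u$ being localized to the time annulus $\{S\le t\le 2S\}$.

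For the subcritical range $1<p<2$ (recall that $(n+1)/n=2$ when $n=1$) this already closes the argument: choosing $S=R$ makes $A_{\mathrm{time}},A_{\mathrm{nl}}\le C\,R^{1-2/p}\to 0$, and after absorbing $Y^{1/p}$ by Young's inequality one gets $\tfrac12 Y+c_0(R)\le C'\,R^{(p-2)/(p-1)}\to 0$ as $R\to\infty$, contradicting $c_0(R)\to c_0>0$. The delicate case is the critical one $p=2$, where the powers of $R$ no longer decay and a two-step bootstrap is needed. In the first step, with $S=R\to\infty$, the inequality reads $Y+c_0(R)\le C_* Y^{1/2}$; either this is already inconsistent (and we are done at once), or it forces $Y=Y_R\le C_*^2$ uniformly in $R$, and then $\phi\to 1$ pointwise together with Fatou's lemma gives $u\in L^2((0,\infty)\times\mathbb R)$. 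In the second step the time-annulus integral $\int_{\{S\le t\le 2S\}}\int_{\mathbb R}|u|^2$ becomes the tail $\varepsilon(S)^2$ of a convergent integral, so $\varepsilon(S)\to 0$, and the inequality takes the form $Y+c_0(R)\le C''\big(\sqrt{R/S}\,\varepsilon(S)+\sqrt{S/R}\big)$; choosing $R=S\,\varepsilon(S)^{-1/2}$, so that $R\to\infty$ while $S/R\to 0$, forces the right-hand side to $0$ and squeezes $c_0(R)\to c_0>0$ down to $0$ — the desired contradiction. (If $\varepsilon(S)=0$ for some large $S$ the argument is even easier: just let $R/S\to\infty$.)

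Along the way one checks the routine admissibility points — $\phi,\partial_t\phi,(-\Delta)^{1/2}\phi\in L^1\cap L^\infty(\mathbb R^{n+1})$, $\mathrm{supp}\,\phi\subset(-\infty,T]\times\mathbb R$ once $2S\le T$ (harmless, since $S\to\infty$), and $Y<\infty$, which follows from the finiteness of the right-hand side of \eqref{eq:2}. The hypothesis $\mathrm{Re}(u_0)\equiv 0$ is used exactly to make the data term real and hence absorbable on the left-hand side, while $-\mathrm{Im}(\int u_0)>0$ supplies its strictly positive sign.
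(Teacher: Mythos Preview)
Your argument is correct but follows a genuinely different route from the one the paper attributes to \cite{FO2015}. There, Proposition~\ref{Proposition:1.2} is obtained by first applying the conjugate operator $-(i\partial_t-(-\Delta)^{1/2})$ to \eqref{eq:1} and taking imaginary parts, which reduces the problem to the \emph{local} wave-type equation \eqref{eq:4}; a standard compactly supported test-function argument then applies with no fractional operator in sight. You instead keep the half-Laplacian and control it directly via the pointwise bound $|(-\Delta)^{1/2}\psi_R|\le R^{-1}\psi_R$ for the Poisson weight $\psi_R(x)=(1+(x/R)^2)^{-1}$ --- this is precisely the $n=1$, $q=2$ instance of Lemma~\ref{Lemma:1.4}, obtained in your write-up from the semigroup identity $P_y\ast P_R=P_{y+R}$. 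The reduction to \eqref{eq:4} buys compactly supported spatial cutoffs and entirely classical estimates; your direct method stays at the level of the weak form \eqref{eq:2}, avoids the extra time derivative on the nonlinearity, and is much closer in spirit to the machinery (Lemma~\ref{Lemma:1.4}, Proposition~\ref{Proposition:2.1}) that the present paper develops for the higher-dimensional critical problem. Your two-step treatment of the endpoint $p=2$ --- first extracting $u\in L^2((0,\infty)\times\mathbb R)$ from the uniform bound $Y\le C_*^2$, then exploiting the vanishing tail $\varepsilon(S)\to0$ with the unbalanced choice $R=S\,\varepsilon(S)^{-1/2}$ --- is a clean bootstrap that substitutes for whatever device \cite{FO2015} uses at the critical exponent.
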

We note that classical test function methods are not applicable to \eqref{eq:1}
because of the non-local operator $(-\Delta)^{1/2}$.
Especially, the lack of the (pointwise) Leibniz rule for $(-\Delta)^{1/2}$ is the main obstacle.
For details of classical test function methods,
we refer the readers to \cite{MP1998,Z1999}.
In \cite{FO2015}, to avoid the difficulty arising from $(-\Delta)^{1/2}$,
\eqref{eq:1} is reduced to the equation
	\begin{eqnarray}
	(\partial_t^2 - \Delta) \mathrm{Im} \thinspace u = - \partial_t(|u|^p).
	\label{eq:4}
	\end{eqnarray}
Equation \eqref{eq:4} is obtained by applying the conjugate operator $- (i \partial_t - (-\Delta)^{1/2})$
to both sides of \eqref{eq:1} and taking the imaginary part of the resulting equation.
We remark that
this reduction can be regarded as the inverse operation of the derivation of semirelativistic equations
from wave equations.
Later, Inui \cite{I2016} obtained not only nonexistence results
but also lifespan estimates of weak solutions to \eqref{eq:1} in the $L^2(\mathbb R^n)$ subcritical case with $n \geq 1$
by improving the approach of \cite{FO2015}.
However, we remark that the method of \cite{I2016} is not applicable to either $L^1(\mathbb R^n)$ or $L^2(\mathbb R^n)$ scaling critical cases.
We also remark that the condition \eqref{eq:3} is technical
and to the best of our knowledge,
the precise condition of initial data for the nonexistence of global weak solutions is not known.
For a related topic, we refer the reader to \cite{FGp}.

In \cite{F2018},
in the $L^1(\mathbb R^n)$ scaling subcritical case with $n \geq 1$,
the following lifespan estimate is obtained:
\begin{Proposition}[{\cite[Proposition 4]{F2018}}]
\label{Proposition:1.3}
Let $n \geq 1$ and $1 < p < (n+1)/n$.
Let $f \in L^1(\mathbb R^n) \backslash \{0\}$ satisfy
	\begin{align}
	\mathrm{Re} \thinspace f(x) = 0,
	\quad \mathrm{and} \quad
	- \mathrm{Im} \thinspace f(x) > 0
	\label{eq:5}
	\end{align}
for any $x \in \mathbb R^n$.
For sufficiently small $\varepsilon$ positive,
there are no global weak solutions to \eqref{eq:1} with $u_0 = \varepsilon f$
and the lifespan estimate
	\[
	T_m \leq C \varepsilon^{\frac{1}{n-1/(p-1)}}
	\]
holds with a positive constant $C=C(n,p,f)$ independent of $\varepsilon$.
\end{Proposition}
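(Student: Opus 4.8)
The plan is to run a modified test function argument built on a spatially localized mass functional, the decisive ingredient being a test function adapted to the non-locality of $(-\Delta)^{1/2}$. Fix once and for all a smooth, strictly positive weight $\psi$ on $\mathbb R^n$ with $\psi\in L^1\cap L^\infty(\mathbb R^n)$ and the pointwise bound
\[
|(-\Delta)^{1/2}\psi(x)|\le C_0\,\psi(x),\qquad x\in\mathbb R^n,
\]
which, since $\psi\in L^1$, also forces $(-\Delta)^{1/2}\psi\in L^1\cap L^\infty(\mathbb R^n)$. Such a $\psi$ exists: one may take $\psi(x)=\langle x\rangle^{-\sigma}$ with $n<\sigma\le n+1$, since $(-\Delta)^{1/2}\psi$ is bounded and, by the singular integral representation of $(-\Delta)^{1/2}$, behaves like $|x|^{-n-1}$ as $|x|\to\infty$, hence is dominated by $\psi$. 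For $R\ge1$ put $\psi_R=\psi(\cdot/R)$; the scaling $(-\Delta)^{1/2}\psi_R=R^{-1}\big((-\Delta)^{1/2}\psi\big)(\cdot/R)$ yields $|(-\Delta)^{1/2}\psi_R|\le C_0R^{-1}\psi_R$ pointwise, and $\chi(t)\psi_R(x)$ with $\chi\in C^\infty(\mathbb R)$, $\mathrm{supp}\,\chi\subset(-\infty,T]$, is an admissible test function in the sense of the Definition.

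For a weak solution $u$ on $[0,T)$ put $G_R(t)=-\mathrm{Im}\int_{\mathbb R^n}u(t,x)\psi_R(x)\,dx$. First I would test \eqref{eq:2} against $\chi(t)\psi_R(x)$ for arbitrary real $\chi\in C_c^\infty([0,\infty))$ and use the self-adjointness of $(-\Delta)^{1/2}$ to deduce that $t\mapsto\int_{\mathbb R^n}u(t)\psi_R$ agrees with a locally absolutely continuous function on $[0,T)$ whose value at $t=0$ is $\int_{\mathbb R^n}u_0\psi_R$ and that, on taking imaginary parts,
\[
G_R'(t)=\int_{\mathbb R^n}|u(t,x)|^p\psi_R(x)\,dx-\int_{\mathbb R^n}\mathrm{Re}\,u(t,x)\,(-\Delta)^{1/2}\psi_R(x)\,dx
\]
for a.e.\ $t\in(0,T)$. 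Here the hypothesis \eqref{eq:5} enters decisively: it gives $G_R(0)=\varepsilon\int_{\mathbb R^n}(-\mathrm{Im}\,f)\psi_R\,dx=:\varepsilon I_0(R)>0$, and $I_0(R)$ is bounded below by a positive constant uniformly for $R\ge R_0$, because $\psi_R(x)\to\psi(0)>0$ pointwise and $-\mathrm{Im}\,f\in L^1(\mathbb R^n)\setminus\{0\}$.

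Next I would absorb the non-local term. Write $m(t)=\int_{\mathbb R^n}|u(t)|^p\psi_R$ and $p'=p/(p-1)$. The bound $|(-\Delta)^{1/2}\psi_R|\le C_0R^{-1}\psi_R$, H\"older's inequality and $\|\psi_R\|_{L^1}=R^n\|\psi\|_{L^1}$ give
\[
\Big|\int_{\mathbb R^n}\mathrm{Re}\,u\,(-\Delta)^{1/2}\psi_R\Big|\le C_0R^{-1}\int_{\mathbb R^n}|u|\psi_R\le C\,R^{n/p'-1}m(t)^{1/p},
\]
so $G_R'(t)\ge m(t)-C R^{n/p'-1}m(t)^{1/p}\ge\tfrac12 m(t)$ once $m(t)\ge c\,R^{n-p'}$. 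On the other hand $G_R(t)\le\int_{\mathbb R^n}|u(t)|\psi_R\le\|\psi_R\|_{L^1}^{1/p'}m(t)^{1/p}$, i.e.\ $m(t)\ge c\,R^{-n(p-1)}G_R(t)^p$ whenever $G_R(t)\ge0$. Combining these, as soon as $G_R(t)\ge A_R:=c\,R^{\,n-1/(p-1)}$ --- with exponent $n-1/(p-1)<0$ in the subcritical range $p<(n+1)/n$, so that $A_R\to0$ as $R\to\infty$ --- one obtains the Riccati-type differential inequality $G_R'(t)\ge c\,R^{-n(p-1)}G_R(t)^p$.

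Finally, for $\varepsilon$ small I would choose $R=R(\varepsilon)$ of order $\varepsilon^{-1/(1/(p-1)-n)}$, which is then $\ge R_0$ and satisfies $G_R(0)=\varepsilon I_0(R)>A_R$; since $G_R$ is therefore nondecreasing, it stays above $A_R$ on all of $[0,T_m)$, the Riccati inequality holds there, and a nonnegative absolutely continuous $y$ with $y'\ge\kappa y^p$, $p>1$, $y(0)>0$ cannot exist beyond time $\big((p-1)\kappa\big)^{-1}y(0)^{-(p-1)}$. This forces $T_m\le C\,R^{n(p-1)}(\varepsilon I_0(R))^{-(p-1)}$, and inserting the chosen $R$ and using $(p-1)\cdot\tfrac1{p-1}=1$ collapses all the exponents to the asserted $T_m\le C\varepsilon^{1/(n-1/(p-1))}$, with $C$ depending only on $n$, $p$ and, through $\|\mathrm{Im}\,f\|_{L^1}$, on $f$. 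I expect the main obstacle to be precisely the non-local term $\int_{\mathbb R^n}\mathrm{Re}\,u\,(-\Delta)^{1/2}\psi_R$: a compactly supported cut-off is of no use, since its fractional Laplacian carries a fat $|x|^{-n-1}$ tail and is not pointwise controlled by the cut-off, so the Leibniz-type estimates of the classical test function method fail; everything rests on producing a positive weight whose fractional Laplacian is simultaneously integrable, bounded and pointwise dominated by the weight, and on checking that it is admissible in \eqref{eq:2}.
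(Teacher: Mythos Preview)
The paper does not itself prove Proposition~\ref{Proposition:1.3}; it is quoted from \cite{F2018}, and the paper only indicates the ingredients: the pointwise control of $(-\Delta)^{1/2}\langle\cdot\rangle^{-q}$ given in Lemma~\ref{Lemma:1.4}, and an ordinary differential inequality in the time variable for a weighted mass. Your proposal reproduces exactly this scheme---you take $\psi=\langle\cdot\rangle^{-\sigma}$ with $n<\sigma\le n+1$ so that Lemma~\ref{Lemma:1.4} yields $|(-\Delta)^{1/2}\psi|\le C\psi$, derive a Riccati inequality $G_R'\ge cR^{-n(p-1)}G_R^{p}$ for $G_R(t)=-\mathrm{Im}\int u(t)\psi_R$, and optimize in the scaling parameter $R$---so the approach is essentially the one the paper attributes to \cite{F2018}, and the argument is correct.
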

Here we denote by $C(\ast, \cdots, \ast)$ a constant depending on the quantities appearing in parenthesis.
We remark that Proposition \ref{Proposition:1.3} is natural from the viewpoint of the scaling property of \eqref{eq:1}.
For example, similar lifespan estimates for Schr\"odinger equations are found in \cite{II2015,IS2019}.
Proposition \ref{Proposition:1.3} is shown without the reduction of \eqref{eq:4}
but with the pointwise control of fractional derivatives of specific test functions.
We note that C\'ordoba and C\'ordoba \cite{CC2004} showed that the pointwise estimate
	\begin{align}
	(-\Delta)^{s/2} (\phi^2 )(x) \leq 2 \phi(x) ((-\Delta)^{s/2} \phi)(x)
	\label{eq:6}
	\end{align}
holds for any $0 \leq s \leq 2$, $\phi \in \mathcal S(\mathbb R^2)$, and $x \in \mathbb R^n$
but \eqref{eq:6} is insufficient to apply the test function method
unless weak solutions are non-negative.
For details, see \cite{DR2014}.
On the other hand, in \cite{F2018},
the following estimate is introduced and plays a critical role in the proof for Proposition \ref{Proposition:1.3}:
\begin{Lemma}
\label{Lemma:1.4}
For $q > 0$,
there exists a positive constant $C = C(n,q)$ such that
the estimate
	\begin{align*}
	&|( (-\Delta)^{1/2} (1 + \cdot^2)^{-q/2} ) (x) |\\
	&\leq C
	\begin{cases}
	(1+x^2)^{-(q+1)/2},
	&\quad \mathrm{if} \quad 0 < q < n,\\
	(1+x^2)^{-(n+1)/2} (1+\log (1+|x|)),
	&\quad \mathrm{if} \quad q = n,\\
	(1+x^2)^{-(n+1)/2},
	&\quad \mathrm{if} \quad q > n.
	\end{cases}
	\end{align*}
holds for any $x \in \mathbb R^n$.
\end{Lemma}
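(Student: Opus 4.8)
The plan is to estimate $(-\Delta)^{1/2} w_q$ with $w_q(x) = (1+|x|^2)^{-q/2}$ directly from the singular integral representation
\[
((-\Delta)^{1/2} w_q)(x) = c_n \, \mathrm{p.v.} \int_{\mathbb R^n} \frac{w_q(x) - w_q(y)}{|x-y|^{n+1}} \, dy,
\]
valid for Schwartz-type profiles (and $w_q$ is smooth with enough decay for $q>0$). The first step is to reduce to the regime $|x|$ large: for $|x| \le 2$ the integrand is controlled by combining the second-order Taylor expansion of $w_q$ near $y=x$ (which kills the principal-value singularity and gives a bound by $\|D^2 w_q\|_{L^\infty}\int_{|x-y|\le 1}|x-y|^{1-n}\,dy < \infty$) with the crude bound $|w_q(x)-w_q(y)| \le w_q(x) + w_q(y)$ on $|x-y|\ge 1$, where $\int_{|x-y|\ge 1} w_q(y)|x-y|^{-n-1}\,dy$ converges; on this compact set $(1+|x|^2)^{-(q+1)/2}$ (or the other two right-hand sides) is bounded below, so the stated inequality is trivial there.

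The second and main step is the region $R := |x| \ge 2$. I would split the $y$-integral into three zones relative to $r := |x|$: the near zone $|x-y| \le r/2$, the far zone $|x-y| \ge 2r$, and the intermediate zone $r/2 \le |x-y| \le 2r$. In the far zone, $|w_q(x)-w_q(y)|\le w_q(x)+w_q(y)$ and $|x-y|^{-n-1}\le (2r)^{-n-1}$ essentially (up to constants, $|x-y|\sim|y|$ there), so one gets a contribution bounded by $r^{-n-1}(r^{-q}\cdot r^n + \|w_q\|_{L^1\text{-type}})$; this is where the three cases appear, since $\int_{|y|\le r} w_q(y)\,dy$ behaves like $r^{n-q}$ for $q<n$, like $\log r$ for $q=n$, and like a constant for $q>n$, yielding respectively $r^{-q-1}$, $r^{-n-1}\log r$, and $r^{-n-1}$ — exactly the claimed profiles (note $(1+x^2)^{-(q+1)/2}\sim r^{-q-1}$, etc.). In the near zone, where $|y|\sim r$ and $w_q$ is smooth with $|\nabla w_q(\zeta)|\lesssim r^{-q-1}$ and $|D^2 w_q(\zeta)|\lesssim r^{-q-2}$ for all $\zeta$ on the segment from $x$ to $y$, the principal value is estimated by Taylor: the linear term integrates to zero by the symmetry of $|x-y|^{-n-1}$, and the quadratic remainder gives $\lesssim r^{-q-2}\int_{|x-y|\le r/2}|x-y|^{1-n}\,dy \lesssim r^{-q-2}\cdot r^2 = r^{-q}$ — wait, this is too weak, so I instead split the near zone further at $|x-y|\le 1$ (Taylor, giving $r^{-q-2}$) versus $1\le|x-y|\le r/2$ (use $|w_q(x)-w_q(y)|\lesssim r^{-q-1}|x-y|$ from the gradient bound, integrate $r^{-q-1}|x-y|^{-n}$ to get $\lesssim r^{-q-1}\log r$, which for $q<n$ is absorbed into $r^{-q-1}$ up to adjusting constants only if we are slightly more careful — in fact using $|w_q(x)-w_q(y)|\lesssim \min(r^{-q-1}|x-y|,\ r^{-q})$ and optimizing recovers the clean $r^{-q-1}$). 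The intermediate zone has measure $\lesssim r^n$, integrand $\lesssim r^{-q}\cdot r^{-n-1}$, giving $r^{-q-1}$ directly.

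The hard part is bookkeeping the near- and intermediate-zone bounds so that for $q < n$ no stray logarithm survives, i.e. genuinely obtaining $r^{-q-1}$ and not $r^{-q-1}\log r$; the trick is to use the sharp two-sided comparison $|w_q(x)-w_q(y)| \lesssim \min\{r^{-q},\ r^{-q-1}|x-y|\}$ on $\{|y|\sim r\}$ and break the $|x-y|$-integral at the scale where the two bounds balance, after which each piece integrates cleanly against $|x-y|^{-n-1}$. The appearance of the logarithm exactly at $q=n$, and its disappearance for $q>n$, is then forced by the far-zone term $\int_{|y|\le r}(1+|y|^2)^{-q/2}\,dy$, which is the only place the dichotomy $q \lessgtr n$ enters. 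Assembling the three (or four) zone estimates and the two crude cases $|x|\le 2$ versus $|x|\ge 2$ into the piecewise bound, with $C=C(n,q)$ collecting the dimensional constant $c_n$ and all the elementary integrals, completes the proof.
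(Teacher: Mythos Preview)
Your overall strategy---singular integral representation plus a near/intermediate/far split in $|x-y|$---is the right one and is essentially what underlies the paper's Lemma~2.2 (the paper itself does not prove Lemma~1.4 but quotes it from \cite{F2018} and records Lemma~2.2 as its generalization). However, your bookkeeping has the intermediate and far zones playing each other's roles, and this is a genuine gap, not just a labeling issue.

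In your far zone $|x-y|\ge 2r$ one has $|y|\ge |x-y|-|x|\ge r$, so the quantity $\int_{|y|\le r} w_q(y)\,dy$ you invoke there simply does not arise. On that zone $|x-y|\sim|y|\gtrsim r$, and both
\[
w_q(x)\int_{|x-y|\ge 2r}|x-y|^{-n-1}\,dy\quad\text{and}\quad\int_{|y|\ge r}|y|^{-q}|y|^{-n-1}\,dy
\]
are $\lesssim r^{-q-1}$ for every $q>0$, with no case distinction. Conversely, your intermediate annulus $r/2\le|x-y|\le 2r$ contains a neighbourhood of the origin (take $y=0$: then $|x-y|=r$), where $w_q(y)\sim 1$, so the bound $|w_q(x)-w_q(y)|\lesssim r^{-q}$ you use there is false. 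The correct estimate is
\[
\int_{r/2\le|x-y|\le 2r}\frac{|w_q(x)-w_q(y)|}{|x-y|^{n+1}}\,dy
\;\lesssim\; r^{-n-1}\Big(r^{-q}r^{n}+\int_{|y|\le 3r}w_q(y)\,dy\Big),
\]
and it is this last integral that equals $\sim r^{n-q}$, $\sim\log r$, or $O(1)$ according as $q<n$, $q=n$, or $q>n$. This is exactly the term $|x|^{-n-1}\int_{|y|<3|x|}|f(y)|\,dy$ appearing in Lemma~2.2.

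Finally, the ``wait'' in your near zone is an arithmetic slip: $\int_{|z|\le r/2}|z|^{1-n}\,dz\sim r$, not $r^{2}$, so the second-order Taylor remainder already gives $r^{-q-2}\cdot r=r^{-q-1}$ cleanly, and the further splitting you attempt is unnecessary. Once the intermediate/far swap is corrected and this slip fixed, your argument is complete.
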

Especially, in the case of Lemma \ref{Lemma:1.4},
$(-\Delta)^{1/2}$ can be treated like a classical differential operator in the scaling subcritical case.
For a related topic, we also refer the reader to \cite{DR2021}.

In the scaling critical case,
a more careful treatment is required and Lemma \ref{Lemma:1.4} is not sufficient for the blowup analysis.
In \cite{DF2021},
the nonexistence of global weak solutions to generalized \eqref{eq:1} in the $L^1(\mathbb R^n)$ scaling critical case with $n \geq 1$ is shown
by using the following identity of the fractional derivative of specific test functions:
\begin{Lemma}[{\cite[Corollary 3.3]{DF2021}}]
\label{Lemma:1.5}
For $q > n$,
	\[
	(-\Delta)^\sigma [ ( 1 + \cdot^2)^{-q/2}](0)
	= 2^{2 \sigma} \frac{\Gamma(\sigma+n/2)}{\Gamma(n/2)} \frac{\Gamma(\sigma+q/2)}{\Gamma(q/2)},
	\]
where $\Gamma$ denotes the usual gamma function.
\end{Lemma}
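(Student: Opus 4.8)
The plan is to evaluate the left-hand side at the origin by expressing the weight $(1+|x|^2)^{-q/2}$ as a superposition of Gaussians and then computing the fractional Laplacian of a single Gaussian from its Fourier transform; throughout I use the convention $\mathcal F g(\xi)=\int_{\mathbb R^n}g(x)e^{-ix\cdot\xi}\,dx$, so that $(-\Delta)^\sigma$ is the Fourier multiplier with symbol $|\xi|^{2\sigma}$. Fix $q>n$ and set $f(x)=(1+|x|^2)^{-q/2}$. Since $q>n$ we have $f\in L^1(\mathbb R^n)$, and as $f$ is real-analytic its Fourier transform decays exponentially, hence $|\xi|^{2\sigma}\mathcal F f\in L^1(\mathbb R^n)$ and $(-\Delta)^\sigma f(0)=(2\pi)^{-n}\int_{\mathbb R^n}|\xi|^{2\sigma}(\mathcal F f)(\xi)\,d\xi$ is well defined. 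I would start from the elementary subordination identity
\begin{equation*}
(1+|x|^2)^{-q/2}=\frac{1}{\Gamma(q/2)}\int_0^\infty t^{q/2-1}e^{-t}e^{-t|x|^2}\,dt,\qquad x\in\mathbb R^n,
\end{equation*}
which is the integral representation of $\Gamma(q/2)$ after the substitution $t\mapsto t(1+|x|^2)$.

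The core of the argument is the value of the fractional Laplacian of a Gaussian at the origin. Using $\mathcal F[e^{-t|\cdot|^2}](\xi)=(\pi/t)^{n/2}e^{-|\xi|^2/(4t)}$, passing to polar coordinates, and reducing the radial integral to a Gamma integral via $u=r^2/(4t)$, I expect to obtain, for every $t>0$,
\begin{equation*}
(-\Delta)^\sigma[e^{-t|\cdot|^2}](0)=\frac{2^{2\sigma}\,\Gamma(\sigma+n/2)}{\Gamma(n/2)}\,t^{\sigma}.
\end{equation*}
As a consistency check, at $\sigma=1$ this reads $2nt$, in agreement with $-\Delta e^{-t|x|^2}\big|_{x=0}=2nt$; likewise $\sigma=1$ in the final formula below gives $nq$, which matches $-\Delta f(0)$.

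Substituting this into the subordination identity and exchanging the $x$-operation with the $t$-integral, I would conclude
\begin{align*}
(-\Delta)^\sigma[(1+\cdot^2)^{-q/2}](0)
&=\frac{1}{\Gamma(q/2)}\int_0^\infty t^{q/2-1}e^{-t}\,(-\Delta)^\sigma[e^{-t|\cdot|^2}](0)\,dt\\
&=\frac{2^{2\sigma}\,\Gamma(\sigma+n/2)}{\Gamma(n/2)\,\Gamma(q/2)}\int_0^\infty t^{\sigma+q/2-1}e^{-t}\,dt\\
&=2^{2\sigma}\,\frac{\Gamma(\sigma+n/2)}{\Gamma(n/2)}\,\frac{\Gamma(\sigma+q/2)}{\Gamma(q/2)},
\end{align*}
which is the claim.

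The step requiring the most care---and the only real obstacle---is the justification of the interchange above, equivalently of the order of integration in $\int_0^\infty\!\!\int_{\mathbb R^n}t^{q/2-1}e^{-t}|\xi|^{2\sigma}(\pi/t)^{n/2}e^{-|\xi|^2/(4t)}\,d\xi\,dt$. After the weight $|\xi|^{2\sigma}$ has been inserted the integrand is nonnegative, and the computation just performed shows the double integral is finite, so Tonelli's theorem applies; this simultaneously confirms a posteriori the Fourier-multiplier representation of $(-\Delta)^\sigma f(0)$ used at the outset. The remaining manipulations are routine Gamma-function identities. Finally, I would remark that the hypothesis $q>n$ serves only to ensure $f\in L^1(\mathbb R^n)$---the class relevant to the test functions used elsewhere in the paper---whereas the pointwise identity itself persists for all $q>0$.
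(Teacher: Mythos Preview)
The paper does not prove this lemma; it is quoted from \cite{DF2021} (Corollary~3.3 there) and used as a black box, so there is no in-paper argument to compare against. Your proof is correct and self-contained: the subordination identity $(1+|x|^2)^{-q/2}=\Gamma(q/2)^{-1}\int_0^\infty t^{q/2-1}e^{-t}e^{-t|x|^2}\,dt$ together with the Gaussian computation $(-\Delta)^\sigma[e^{-t|\cdot|^2}](0)=2^{2\sigma}\Gamma(\sigma+n/2)\Gamma(n/2)^{-1}t^\sigma$ gives the result after one application of Tonelli, exactly as you outline, and the consistency checks at $\sigma=1$ are correct. Two minor remarks: the exponential decay of $\mathcal F f$ that you invoke can be made explicit by recognising $\mathcal F[(1+|\cdot|^2)^{-q/2}]$ as a Bessel kernel, but you do not actually need this, since once the problem is written as the double integral $\int_0^\infty\int_{\mathbb R^n}t^{q/2-1}e^{-t}(\pi/t)^{n/2}|\xi|^{2\sigma}e^{-|\xi|^2/(4t)}\,d\xi\,dt$ the integrand is nonnegative and Tonelli applies directly; and your closing observation that the identity extends to all $q>0$ is correct, though for $q\le n$ the justification via $f\in L^1$ must be replaced by a distributional or singular-integral reading of $(-\Delta)^\sigma f(0)$.
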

We note that $\Gamma(1/2+\cdot)/\Gamma(\cdot)$ is an increasing function on $\lbrack 0, \infty)$.
Then we set
	\[
	\eta_0(x) = ( 1 + x^2)^{-(n+1)/2} - C_0 ( 1 + x^2)^{-(n+2)/2},
	\]
where the constant $C_0 \in (0,1)$ is given and computed by
	\begin{align}
	C_0
	&=\frac{\Gamma((n+2)/2)^2}{\Gamma((n+1)/2) \Gamma((n+3)/2)}
	\nonumber\\
	&= \Big( \frac \pi 2 \Big)^{2 ( n \thinspace \mathrm{mod} \thinspace 2) - 1}\frac{1}{(n+1)} \Big( \frac{n!!}{(n-1)!!} \Big)^2
	\label{eq:7}
	\end{align}
where $n!!$ and $(n-1)!!$ denote the double factorials of $n$ and $n-1$, respectively.
In order to show the nonexistence of global weak solutions in the scaling critical case,
it is important that $\eta_0$ is a positive smooth function and $(-\Delta)^{1/2} \eta_0(0)=0$.
We will use these properties later.
However, the lifespan estimate of weak solutions cannot be obtained by the method of \cite{DF2021},
just as one cannot obtain any lifespan estimate for Schr\"odinger equations with classical test function methods.

The purpose of the current manuscript is to obtain the lifespan estimate
of weak solutions with $p=(n+1)/n$: in the $L^1(\mathbb R^n)$ scaling critical case.
In particular,
we combine the approaches of \cite{IS2019} and \cite{DF2021}.
In \cite{IS2019},
Ikeda and Sobajima derived an ordinal differential inequality (ODI)
from damped wave equations with respect to a scaling parameter,
while an ODI with respect to time was used in \cite{F2018}.
This ODI with respect to the scaling parameter plays a critical role
for obtaining the lifespan estimate of weak solutions in the scaling critical case.
However, the derivation of the ODI relies on the classical Leibniz rule
and one cannot apply the approach of \cite{IS2019} directly to the blowup analysis of \eqref{eq:1}.
In this manuscript,
we modify the argument of \cite{IS2019} with the idea of \cite{DF2021}
so as to deal with the fractional differential operator $(-\Delta)^{1/2}$.

Now we can provide our main statement.
\begin{Theorem}
\label{Theorem:1.6}
Let $n \geq 1$ and $p = (n+1)/n$.
Let $f \in L^1(\mathbb R^n)$ satisfy \eqref{eq:5}.
For sufficiently small $\varepsilon$ positive,
there are no global weak solutions to \eqref{eq:1} with $u_0 = \varepsilon f$
and the following lifespan estimate holds with a positive constant $C=C(n,p,f)$:
	\[
	T_m \leq \exp(C \varepsilon^{-1/n}).
	\]
\end{Theorem}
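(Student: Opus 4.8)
The plan is to combine a rescaled test function argument in the spirit of \cite{IS2019} with the special profile $\eta_0$ from \cite{DF2021}, exploiting the fact that $\eta_0>0$ is smooth and $(-\Delta)^{1/2}\eta_0(0)=0$. First I would fix a cutoff in time: take $\theta\in C^\infty([0,\infty))$ with $\theta\equiv 1$ on $[0,1]$, $\theta\equiv 0$ on $[2,\infty)$, $0\le\theta\le1$, and for a scaling parameter $\lambda\ge 1$ set the test function
	\[
	\phi_\lambda(t,x) = \theta(t/\lambda)\, \eta_0(x/\lambda).
	\]
Plugging $\phi_\lambda$ into the weak formulation \eqref{eq:2} and using \eqref{eq:5} (so that $\mathrm{Im}\,u_0<0$ pointwise and $\mathrm{Re}\,u_0\equiv0$), the boundary term $i\int u_0\overline{\phi_\lambda(0,\cdot)}\,dx$ contributes a strictly negative imaginary part of size comparable to $\varepsilon\lambda^n\int(-\mathrm{Im}\,f)\eta_0(\cdot/\lambda)$; after taking imaginary parts of the whole identity one obtains, schematically,
	\[
	c\,\varepsilon\,\lambda^{n} \;\lesssim\; \int_0^{2\lambda}\!\!\int_{\mathbb R^n} |u|^p\,\phi_\lambda\,dx\,dt \;+\; \Big|\int_0^{2\lambda}\!\!\int_{\mathbb R^n} u\,\overline{(i\partial_t+(-\Delta)^{1/2})\phi_\lambda}\,dx\,dt\Big|.
	\]
The time-derivative part of the error term is controlled by $\lambda^{-1}$ times an $L^p$-norm of $u$ on the support, estimated by Hölder against $\phi_\lambda$ as in \cite{F2018}; it carries a favorable power of $\lambda$.

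The crux is the fractional term $\int_0^{2\lambda}\!\!\int u\,\overline{(-\Delta)^{1/2}\phi_\lambda}\,dx\,dt$. By the scaling identity for $(-\Delta)^{1/2}$ recalled in the introduction, $((-\Delta)^{1/2}\phi_\lambda)(t,x)=\lambda^{-1}\theta(t/\lambda)\,((-\Delta)^{1/2}\eta_0)(x/\lambda)$. The key structural input from \cite{DF2021} is that $\eta_0$ is built precisely so that $(-\Delta)^{1/2}\eta_0$ vanishes at the origin; together with Lemma \ref{Lemma:1.4} applied to each term $(1+\cdot^2)^{-(n+1)/2}$ and $(1+\cdot^2)^{-(n+2)/2}$ (both with $q\ge n$), one gets the pointwise decay $|((-\Delta)^{1/2}\eta_0)(x)|\lesssim (1+x^2)^{-(n+1)/2}\min\{1,|x|\}$ or at worst $(1+x^2)^{-(n+1)/2}(1+\log(1+|x|))$ at the borderline. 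I would then bound the fractional error by Hölder:
	\[
	\Big|\int_0^{2\lambda}\!\!\int u\,\overline{(-\Delta)^{1/2}\phi_\lambda}\Big|
	\;\lesssim\; \lambda^{-1}\Big(\int_0^{2\lambda}\!\!\int |u|^p\phi_\lambda\Big)^{1/p}\Big(\int_0^{2\lambda}\!\!\int \frac{|(-\Delta)^{1/2}\eta_0(x/\lambda)|^{p'}}{\eta_0(x/\lambda)^{p'/p}}\,dx\,dt\Big)^{1/p'},
	\]
and the point of the critical exponent $p=(n+1)/n$, i.e. $p'=n+1$, is that $|(-\Delta)^{1/2}\eta_0|^{p'}/\eta_0^{p'-1}\sim (1+x^2)^{-(n+1)(n+1)/2}\cdot (1+x^2)^{n(n+1)/2}= (1+x^2)^{-(n+1)/2}$ decays exactly like $|x|^{-(n+1)}$, which is the logarithmically divergent borderline in $\mathbb R^n$. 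Hence the spatial integral over $|x|\le\lambda$ produces a factor $\log\lambda$ (or more precisely $(\log\lambda)^{1/p'}$ after the root, with an extra log from the borderline case of Lemma \ref{Lemma:1.4}), and the time integral a factor $\lambda$, giving an error term of the form $\lambda^{-1}\cdot\big(\int|u|^p\phi_\lambda\big)^{1/p}\cdot(\lambda\cdot\lambda^n\log\lambda)^{1/p'}$.

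Writing $Y(\lambda)=\int_0^{2\lambda}\!\!\int|u|^p\phi_\lambda\,dx\,dt$, the estimates above yield an inequality of the shape
	\[
	c\,\varepsilon\,\lambda^{n} \;\le\; Y(\lambda) + C\,\lambda^{-1+(n+1)/p'}(\log\lambda)^{1/p'}\,Y(\lambda)^{1/p},
	\]
and since $-1+(n+1)/p' = -1 + (n+1)/(n+1)\cdot\frac{n+1}{n+1}$... more carefully, $(n+1)/p' = n$ so the error exponent is $n-1+$ (the time factor), and balancing against $\varepsilon\lambda^n$ forces $Y(\lambda)^{1/p}\gtrsim \varepsilon\lambda\,(\log\lambda)^{-1/p'}$, i.e. $Y(\lambda)\gtrsim \varepsilon^{p}\lambda^{p}(\log\lambda)^{-p/p'}$. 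On the other hand, if a weak solution exists on $[0,T)$ with $T>2\lambda$, then $Y(\lambda)$ is finite for every such $\lambda$; feeding the lower bound on $Y(\lambda)$ back into the right-hand side and iterating (this is the ODI-in-$\lambda$ mechanism of \cite{IS2019}) forces $Y$ to grow faster than any power, and a Gronwall-type differential inequality $Y'(\lambda)\gtrsim \varepsilon^{?}\,\lambda^{?}$ obtained by differentiating in $\lambda$ integrates to $\log T_m \lesssim \varepsilon^{-1/n}$ after optimizing the free parameter $\lambda$. The main obstacle, and the place where all the care goes, is precisely the borderline decay estimate for $(-\Delta)^{1/2}\eta_0$ and the verification that the weight $|(-\Delta)^{1/2}\eta_0|^{p'}/\eta_0^{p'-1}$ is integrable with only a logarithmic loss — this is what upgrades the polynomial lifespan of Proposition \ref{Proposition:1.3} to the exponential bound $T_m\le\exp(C\varepsilon^{-1/n})$ in the critical case; the subtle point is that $\eta_0$ vanishing at the origin is not enough by itself, one needs the quantitative rate near $x=0$ combined with Lemma \ref{Lemma:1.4} to keep the constant independent of $\lambda$.
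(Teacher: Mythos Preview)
Your sketch contains two miscalculations that hide the real difficulty. First, the data term $\int u_0\,\phi_\lambda(0,\cdot)\,dx=\varepsilon\int f(x)\,\eta_0(x/\lambda)\,dx$ is of order $\varepsilon$, not $\varepsilon\lambda^n$, since $\eta_0$ is bounded. Second, once you use that $(-\Delta)^{1/2}\eta_0(0)=0$ together with Lemma~\ref{Lemma:1.4} (both exponents $n+1,n+2$ are strictly larger than $n$), the weight $|(-\Delta)^{1/2}\eta_0|^{p'}/\eta_0^{p'-1}$ is genuinely integrable on $\mathbb R^n$; it decays like $|x|^{-n-1}$ at infinity and is bounded near the origin, so there is \emph{no} logarithmic divergence in your H\"older step. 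With the correct bookkeeping, your argument produces only
\[
c\,\varepsilon + Y(\lambda)\ \le\ C\,Y(\lambda)^{n/(n+1)},\qquad Y(\lambda)=\int_0^{2\lambda}\!\!\int_{\mathbb R^n}|u|^p\phi_\lambda\,dx\,dt,
\]
with the \emph{same} quantity on both sides. This merely says $Y(\lambda)$ is uniformly bounded; it gives no restriction on $\lambda$, and differentiating or iterating it in $\lambda$ does not manufacture an ODI. The logarithm in $T_m\le\exp(C\varepsilon^{-1/n})$ does not come from a borderline spatial integral --- it comes from integrating a genuine differential inequality in the scaling parameter, and that inequality is precisely what your setup fails to reach.

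The missing mechanism, which the paper supplies, has two parts. First, the spatial profile $\eta_0(x)$ is replaced by a \emph{space--time} profile $\eta(t,x)=(1+t^2+x^2)^{-(n+1)/2}-C_0(1+t^2+x^2)^{-(n+2)/2}$, and one proves (this requires a new lemma beyond Lemma~\ref{Lemma:1.4}, handling the regime $t\ge\max(|x|,1)$) the pointwise bound
\[
|(-\Delta)^{1/2}\eta(t,x)|\ \le\ C\min\big\{(t^2+x^2)^{1/2},\,(1+t^2+x^2)^{-(n+1)/2}\big\}.
\]
Second, and this is the heart of the matter, the $\min$--structure is used to insert an \emph{auxiliary} weight $\psi_r$ (equal to $\phi_r$ times a factor built from that $\min$) into the H\"older estimate, so that the right--hand side is controlled by $y(r)^{n/(n+1)}$ with $y(r)=\int\!\int|u|^p\psi_r$, while the nonlinear term on the left still carries $\int\!\int|u|^p\phi_r$. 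The elementary but crucial inequality $\int_0^R\psi_r(t,x)\,(r+1)^{-1}\,dr\le C\,\phi_R(t,x)$ then yields, for $Y(R)=\int_0^R y(r)(r+1)^{-1}\,dr$, the ODI $\varepsilon+Y(R)\le C((R+1)Y'(R))^{n/(n+1)}$, whose integration gives $\log(R+1)\le C\varepsilon^{-1/n}$. Without decoupling the weight in the error term from the weight in the nonlinear term, the ODI cannot be obtained, and your proposal does not contain any substitute for this step.
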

We remark that Theorem \ref{Theorem:1.6} is again natural from the viewpoint of the scaling property of \eqref{eq:1}.
Indeed, Theorem \ref{Theorem:1.6} corresponds to Proposition 2.1 of \cite{IS2019}.
However, we note that, in general, Theorem \ref{Theorem:1.6} is not a sharp estimate.
For example, in the one dimensional case,
if initial data are (possibly integrable but) singular at the origin in some sense,
t (possibly integrable but)hen it is shown in \cite{F2015} that there are no local weak solutions to \eqref{eq:1};
namely, there exist no weak solutions on $[0,T)$ for any positive $T$.
In addition, even if the initial data are bounded,
then a sharper lifespan estimate
	\begin{align}
	T_m \leq C \varepsilon^{-p+1}
	\label{eq:8}
	\end{align}
is obtained for any $p>1$ in \cite{Fp}.
Indeed, in the $L^1(\mathbb R)$ critical case, \eqref{eq:8} is rewritten by
	\[
	T_m \leq C \varepsilon^{-1}.
	\]
Roughly speaking,
this is because when $n=1$, \eqref{eq:1} can be identified with the semilinear advection equation:
	\begin{align}
	\begin{cases}
	\partial_t w + \partial_x w = w^p,
	& t \in \lbrack 0, T), \quad x \in \mathbb R,\\
	w(0,x) = w_0(x),
	&x \in \mathbb R,
	\end{cases}
	\label{eq:9}
	\end{align}
whose solutions are formally but explicitly given by
	\[
	w(t,x) = \Big( w_0(t-x)^{-p+1} - (p-1) t \Big)^{-1/(p-1)},
	\]
with real valued $w_0$.
This representation of $w$ formally implies
the nonexistence of local weak solutions to \eqref{eq:1} and the lifespan estimate \eqref{eq:8}.
On the other hand, in the multi-dimensional case,
\eqref{eq:1} cannot be identified with \eqref{eq:9}
and \eqref{eq:8} seems not to hold.
Therefore,
it is unclear whether or not Theorem \ref{Theorem:1.6} is sharp.

In the next section,
we collect some estimates of fractional derivatives.
In the last section,
we show Theorem \ref{Theorem:1.6}.

\section{Some Estimates of Fractional Derivatives}
We set
	\begin{align}
	\eta(t,x)
	&= ( 1 + t^2+x^2)^{-(n+1)/2} - C_0 ( 1 + t^2 + x^2)^{-(n+2)/2},
	\label{eq:10}
	\end{align}
where $C_0 \in (0,1)$ is given by \eqref{eq:7}.
The purpose of this section is to show the following estimate:
\begin{Proposition}
\label{Proposition:2.1}
The estimate
	\begin{align}
	|(-\Delta)^{1/2} \eta(t,x)|
	&\leq C \min \Big\{ (t^2+x^2)^{1/2} , ( 1 + t^2+x^2)^{-(n+1)/2} \Big\}
	\label{eq:11}
	\end{align}
holds for any $t \in \mathbb R$ and $x \in \mathbb R^n$ with some positive constant $C$ independent of $t$ and $x$.
\end{Proposition}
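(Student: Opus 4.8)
The plan is to remove the time variable by the scaling property of $(-\Delta)^{1/2}$ recalled in the introduction, and thereby reduce everything to the two fixed profiles $g_q:=(-\Delta)^{1/2}[(1+\cdot^2)^{-q/2}]$ with $q=n+1$ and $q=n+2$, for which Lemma~\ref{Lemma:1.4} and Lemma~\ref{Lemma:1.5} already provide, respectively, the spatial decay and the value at the origin. Put $R=R(t):=(1+t^2)^{1/2}\ge 1$. Since $R^2+x^2=R^2\bigl(1+(x/R)^2\bigr)$, one has $(R^2+x^2)^{-q/2}=R^{-q}(1+(x/R)^2)^{-q/2}$, and combining this with the identity $(-\Delta)^{1/2}(f(\lambda\,\cdot))(x)=\lambda\,((-\Delta)^{1/2}f)(\lambda x)$ (the operator acting in the spatial variable $x$, with $\lambda=1/R$) gives $(-\Delta)^{1/2}[(R^2+\cdot^2)^{-q/2}](x)=R^{-q-1}g_q(x/R)$. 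Hence, from \eqref{eq:10},
\begin{equation}
(-\Delta)^{1/2}\eta(t,x)=R^{-(n+2)}\,g_{n+1}(x/R)-C_0\,R^{-(n+3)}\,g_{n+2}(x/R).\tag{$\ast$}
\end{equation}
All functions above are smooth, integrable and polynomially decaying, so each expression is well defined (for instance via the Fourier multiplier).

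For the second entry of the minimum in \eqref{eq:11}, I would apply Lemma~\ref{Lemma:1.4} with $q=n+1$ and $q=n+2$ (both $>n$), obtaining $|g_q(y)|\le C(1+y^2)^{-(n+1)/2}$ for every $y$. Substituting $y=x/R$, using $(1+(x/R)^2)^{-(n+1)/2}=R^{n+1}(R^2+x^2)^{-(n+1)/2}$ and $R^2+x^2=1+t^2+x^2$ in $(\ast)$, and recalling $R\ge 1$, yields
\begin{equation*}
|(-\Delta)^{1/2}\eta(t,x)|\le C\,(R^{-1}+C_0R^{-2})\,(1+t^2+x^2)^{-(n+1)/2}\le C\,(1+t^2+x^2)^{-(n+1)/2}.
\end{equation*}

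For the bound by $(t^2+x^2)^{1/2}$, which is the crux, I would first evaluate $(\ast)$ at the origin, where $R(0)=1$, and use Lemma~\ref{Lemma:1.5} with $\sigma=1/2$:
\begin{align*}
(-\Delta)^{1/2}\eta(0,0)
&=g_{n+1}(0)-C_0\,g_{n+2}(0)\\
&=\frac{2\,\Gamma((n+1)/2)}{\Gamma(n/2)}\left(\frac{\Gamma((n+2)/2)}{\Gamma((n+1)/2)}-C_0\,\frac{\Gamma((n+3)/2)}{\Gamma((n+2)/2)}\right)=0,
\end{align*}
where the last equality is exactly the choice \eqref{eq:7} of $C_0$ (equivalently, the vanishing $(-\Delta)^{1/2}\eta_0(0)=0$ noted in the introduction). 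It then remains to observe that the right-hand side of $(\ast)$ is a $C^1$ — indeed $C^\infty$ — function of $(t,x)$ near the origin: $R(t)$ is smooth and positive, $x/R(t)$ is smooth in $(t,x)$, and each $g_q$ is smooth on $\mathbb R^n$ since its Fourier transform $|\xi|\,\widehat{(1+\cdot^2)^{-q/2}}(\xi)$ is bounded near $\xi=0$ (because $q>n$) and of rapid decay at infinity, so $|\xi|^k\,|\xi|\,\widehat{(1+\cdot^2)^{-q/2}}(\xi)\in L^1$ for all $k\ge 0$. By the mean value theorem together with $(-\Delta)^{1/2}\eta(0,0)=0$ we then get $|(-\Delta)^{1/2}\eta(t,x)|\le C(t^2+x^2)^{1/2}$ whenever $(t^2+x^2)^{1/2}\le 1$; when $(t^2+x^2)^{1/2}\ge 1$ the same bound follows from the uniform estimate of the previous paragraph, since $(1+t^2+x^2)^{-(n+1)/2}\le 1\le(t^2+x^2)^{1/2}$. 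Putting the two displayed bounds together gives \eqref{eq:11}.

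The main obstacle is precisely this near-origin estimate. Lemma~\ref{Lemma:1.4} only controls the decay and, as stressed in the introduction, is insufficient in the scaling critical case; one must use that the singular contributions to $(-\Delta)^{1/2}\eta$ coming from $(1+t^2+x^2)^{-(n+1)/2}$ and from $C_0(1+t^2+x^2)^{-(n+2)/2}$ cancel \emph{exactly} at the origin — this is the role of $C_0$ and of the explicit formula in Lemma~\ref{Lemma:1.5} — and then propagate this cancellation to a whole neighbourhood of the origin through the regularity of the rescaled profiles $g_q$. The reduction $(\ast)$ to the $t$-independent profiles is what lets the decay input of Lemma~\ref{Lemma:1.4} and the value-at-the-origin input of Lemma~\ref{Lemma:1.5} be used together.
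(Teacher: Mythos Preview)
Your proof is correct and takes a genuinely different, more streamlined route than the paper's.

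The paper splits into three regions and, to handle $t\ge\max(|x|,1)$, proves the separate Lemma~\ref{Lemma:2.3} by a direct singular--integral computation (splitting $|y|\lessgtr 2t$, Taylor expansion, a further split $\theta\lessgtr|x|/4t$), obtaining a bound with an extra $\log(4t)$ factor that is then absorbed by the spare power of $(1+t^2+x^2)^{-1/2}$. Only after this does the paper invoke Lemma~\ref{Lemma:1.5} and the mean value theorem for $t^2+x^2<1$, exactly as you do. Your scaling identity $(\ast)$ eliminates the need for Lemma~\ref{Lemma:2.3} altogether: once $(-\Delta)^{1/2}\eta(t,x)$ is written in terms of the fixed $t$-independent profiles $g_{n+1},g_{n+2}$, the decay bound $(1+t^2+x^2)^{-(n+1)/2}$ follows immediately from Lemma~\ref{Lemma:1.4} for all $(t,x)$, uniformly in $t$, with no logarithmic loss to manage. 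The near-origin argument is then essentially the same in both proofs. What your approach buys is brevity and transparency; what the paper's Lemma~\ref{Lemma:2.3} buys is an explicit pointwise estimate for each individual term $(-\Delta)^{1/2}[(1+t^2+\cdot^2)^{-q/2}]$, which is of independent interest but not actually needed for Proposition~\ref{Proposition:2.1}.
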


First, we recall that in \cite{DF2021}, a generalization of Lemma \ref{Lemma:1.4} is shown:
\begin{Lemma}[{\cite[Lemma 3.2]{DF2021}}]
\label{Lemma:2.2}
Let $f \in C^2(\mathbb R^n)$ be almost decreasing with almost decreasing second derivatives:
	\[
	|f(y)| \leq C_1 |f(x)|,\quad
	\sup_{|\alpha|=2} |\partial^\alpha f(y)|
	\leq C_1 \sup_{|\alpha|=2} |\partial^\alpha f(x)|
	\]
hold when $|x| \leq |y|$ with some positive constant $C_1$ independent of $x$ and $y$,
then the pointwise estimate
	\begin{align*}
	| (-\Delta)^{\sigma} f (x)|
	&\leq C {|x|^{-n-2 \sigma}} \int_{|y| < 3 |x|} |f(y)| dy
	+ |f(x)| |x|^{-2 \sigma}\\
	&+ C \frac{2^{3-2 \sigma}}{2-2 \sigma}
	|x|^{2-2 \sigma}
	\sum_{|\alpha|=2} \frac{|\alpha|}{\alpha!}
	|\partial^\alpha f ( \frac x 2 ) |,
	\end{align*}
holds for any $\sigma\in(0,1)$ and $|x| > 1$ with some positive constant $C = C(n,C_1)$.
\end{Lemma}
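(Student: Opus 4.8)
The plan is to start from the hypersingular integral representation of the fractional Laplacian, valid for $\sigma\in(0,1)$, in its symmetric second-difference form
\[
(-\Delta)^\sigma f(x)
= -\frac{c_{n,\sigma}}{2}\int_{\mathbb R^n}\frac{f(x+z)+f(x-z)-2f(x)}{|z|^{n+2\sigma}}\,dz ,
\]
where $c_{n,\sigma}>0$ is the usual normalizing constant. The symmetric difference removes the need for a principal value and makes the integrand absolutely integrable once $f\in C^2$ and $\sigma<1$. I would then split the domain into a near zone $\{|z|<|x|/2\}$ and a far zone $\{|z|\geq|x|/2\}$, the threshold being chosen so that every point on the segments joining $x$ to $x\pm z$ keeps modulus larger than $|x|/2$ in the near zone.

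In the near zone I would expand $f(x\pm z)$ by Taylor's formula to second order with integral remainder. The zeroth-order terms cancel against $-2f(x)$, and by the symmetry $z\leftrightarrow -z$ the first-order gradient terms cancel as well, leaving
\[
f(x+z)+f(x-z)-2f(x)
=\int_0^1(1-t)\sum_{|\alpha|=2}\frac{|\alpha|}{\alpha!}\bigl(\partial^\alpha f(x+tz)+\partial^\alpha f(x-tz)\bigr)z^\alpha\,dt .
\]
Since $|x\pm tz|\geq|x|-|z|>|x|/2=|x/2|$ throughout the near zone, the almost-decreasing hypothesis on the second derivatives lets me replace each $|\partial^\alpha f(x\pm tz)|$ by $C_1\sup_{|\beta|=2}|\partial^\beta f(x/2)|$. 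Bounding $|z^\alpha|\leq|z|^2$ and integrating the resulting kernel, $\int_{|z|<|x|/2}|z|^{2-n-2\sigma}\,dz\sim(2-2\sigma)^{-1}|x|^{2-2\sigma}$, produces the third term of the asserted bound, with the factors $(2-2\sigma)^{-1}$ and the power of $2$ emerging from this radial integral.

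For the far zone I would use $|f(x+z)+f(x-z)-2f(x)|\leq|f(x+z)|+|f(x-z)|+2|f(x)|$. The contribution of $2|f(x)|$ integrates against $|z|^{-n-2\sigma}$ over $\{|z|\geq|x|/2\}$ to give a multiple of $|f(x)|\,|x|^{-2\sigma}$, the second term. For the remaining two pieces I would change variables to $y=x\pm z$ and, using the symmetry in $z$, reduce to $\int_{|y-x|\geq|x|/2}|f(y)|\,|y-x|^{-n-2\sigma}\,dy$, which I split once more at $|y|=3|x|$: on $\{|y|<3|x|\}$ the kernel is bounded by a multiple of $|x|^{-n-2\sigma}$, yielding the first term $|x|^{-n-2\sigma}\int_{|y|<3|x|}|f(y)|\,dy$; on $\{|y|\geq 3|x|\}$ one has $|y-x|\geq\frac23|y|$ and, by the almost-decreasing hypothesis on $f$ itself, $|f(y)|\leq C_1|f(x)|$, so this tail integrates to another multiple of $|f(x)|\,|x|^{-2\sigma}$ that is absorbed into the second term.

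The main obstacle, and the reason for the particular geometry of the split, is keeping the evaluation points of the second derivatives away from the origin: the almost-decreasing bound on $\partial^\alpha f$ can only be invoked where $|x\pm tz|\geq|x/2|$, which forces the near-zone radius to be at most $|x|/2$ and is the step that genuinely uses the $C^2$ regularity together with the two structural monotonicity hypotheses. Tracking the constants through the kernel integrals and checking that every piece is finite for $|x|>1$ and $\sigma\in(0,1)$ then gives $C=C(n,C_1)$ and completes the proof.
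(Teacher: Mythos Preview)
Your argument is correct. The paper does not actually prove Lemma~\ref{Lemma:2.2}; it simply quotes it from \cite[Lemma~3.2]{DF2021}, so there is no in-paper proof to compare against. That said, your decomposition---symmetric second-difference representation, a near zone $\{|z|<|x|/2\}$ handled by the second-order Taylor remainder together with the almost-decreasing bound on $\sup_{|\alpha|=2}|\partial^\alpha f|$, and a far zone $\{|z|\ge |x|/2\}$ split further at $|y|=3|x|$ with the almost-decreasing bound on $|f|$ controlling the tail---is exactly the standard route for such pointwise estimates and is in all likelihood the argument in the cited reference. Each step is justified: the choice $|z|<|x|/2$ keeps $|x\pm tz|>|x|/2$, the radial integral $\int_{|z|<|x|/2}|z|^{2-n-2\sigma}\,dz$ produces the $(2-2\sigma)^{-1}|x|^{2-2\sigma}$ factor, and the far-zone tail with $|y|\ge 3|x|$ gives $|y-x|\ge \tfrac23|y|$ so that the almost-decreasing hypothesis on $f$ closes the estimate into the $|f(x)|\,|x|^{-2\sigma}$ term.
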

Lemma \ref{Lemma:2.2} implies that we have
	\[
	|(-\Delta)^{1/2} \eta(t,x)| \leq C (1+x^2)^{-(n+1)/2}
	\]
for $|x|>1$ with $C$ independent of $t$ and $x$.
Therefore \eqref{eq:11} holds when $\max(t,1) \leq |x|$ because the estimate
	\[
	(1+x^2)^{-1}
	\leq 2 (1+t^2+x^2)^{-1}
	\]
holds.
In order to show \eqref{eq:11} when $\max(1, |x|) \leq t$,
we need the following lemma:

\begin{Lemma}
\label{Lemma:2.3}
The estimate
	\[
	|(-\Delta)^{1/2}[(1+t^2+\cdot^2)^{-q/2}](x)|
	\leq C \log(4 t) (1+t^2+x^2)^{-(q+1)/2}
	\]
holds for any $q > 0$ and $t \geq \max(|x|,1)$
with some positive constant $C$ independent of $t$ and $x$.
\end{Lemma}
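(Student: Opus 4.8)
The plan is to estimate $(-\Delta)^{1/2}[(1+t^2+\cdot^2)^{-q/2}](x)$ directly from the singular-integral representation of the fractional Laplacian. Writing $g_t(y) = (1+t^2+y^2)^{-q/2}$, we have
	\[
	(-\Delta)^{1/2} g_t(x)
	= c_n \, \mathrm{p.v.} \int_{\mathbb R^n} \frac{g_t(x) - g_t(y)}{|x-y|^{n+1}} \, dy,
	\]
and the goal is to show this is bounded by $C \log(4t) (1+t^2+x^2)^{-(q+1)/2}$ when $t \geq \max(|x|,1)$. The first step is to exploit the scaling structure: setting $\lambda = (1+t^2)^{1/2}$ and rescaling $y = \lambda z$, $x = \lambda w$ (so $|w| \leq 1$ in the regime of interest), the homogeneity of $(-\Delta)^{1/2}$ reduces the claim to a bound of the form $|(-\Delta)^{1/2}[(1+\cdot^2)^{-q/2}](w)| \leq C \log(4t) (1+w^2)^{-(q+1)/2}$ uniformly for $|w| \leq 1$, with the $\lambda^{-(q+1)}$ factors matching on both sides. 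Since $(1+w^2)^{-(q+1)/2}$ is comparable to a constant on $|w| \leq 1$, the task becomes: show $(-\Delta)^{1/2}[(1+\cdot^2)^{-q/2}]$ is bounded on the unit ball, but with the bound allowed to grow like $\log(4t)$ — which, after unwinding, corresponds to the logarithmic divergence that appears precisely when $q+1 = n+1$, i.e. near the critical exponent.

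The second step is the core estimate: bounding $|(-\Delta)^{1/2} h(w)|$ for $h(y) = (1+y^2)^{-q/2}$ and $|w|\leq 1$. I would split the principal-value integral into three regions. On $\{|y-w| < 1\}$ I use the second-order Taylor expansion of $h$ around $w$ to kill the principal-value singularity; since $h$ is smooth with bounded second derivatives near $w$, this piece is bounded by a constant times $\sup_{|\alpha|=2}\|\partial^\alpha h\|_{L^\infty(B)}$, uniformly in $|w|\leq 1$. On the far region $\{|y| > 2\}$ both $|h(w)|$ and the integral $\int_{|y|>2} |h(y)| |y|^{-n-1} dy$ are controlled; the decay $h(y) \sim |y|^{-q}$ makes this convergent (indeed this is where $q>0$ enters, though for small $q$ one must be slightly careful — but note that in our application $q \in \{n+1, n+2\}$). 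The delicate region is the intermediate annulus $\{1 \leq |y-w|\}\cap\{|y|\leq 2\}$ together with the transition near $|y| \sim 1$: here $h$ is still smooth and bounded, so $\int \frac{|h(w)-h(y)|}{|y-w|^{n+1}} dy$ over $1 \leq |y-w| \lesssim 1$ is a bounded integral. I expect the genuine logarithm to enter not from $h$ itself but from the \emph{unscaled} picture: when one tracks how the rescaled bound $C(1+w^2)^{-(q+1)/2}$ translates back, the constant $C$ absorbing the tail $\int_{|y-w|\geq 1, |y|\leq C t}$ of the original integral over $g_t$ produces a factor $\int_1^{t} r^{-1}\,dr \sim \log t$ because on that range $g_t$ is essentially flat ($\sim (1+t^2)^{-q/2}$) and the kernel $|y-x|^{-n-1}$ integrated over an annulus of radius $r \in [1, t]$ on $\mathbb R^n$ contributes $r^{-1}\,dr$.

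So more honestly, the cleanest route is probably to \emph{not} fully rescale but to split the original integral for $(-\Delta)^{1/2} g_t(x)$ at scales $|y-x| < 1$, $1 \leq |y-x| \leq t$, and $|y-x| > t$. The near piece gives $\sup |\partial^\alpha g_t| \lesssim (1+t^2)^{-(q+2)/2} \leq (1+t^2+x^2)^{-(q+1)/2}$ using $|x|\leq t$. The middle piece: on this range $g_t(y) \lesssim (1+t^2)^{-q/2}$ and $g_t(x) \lesssim (1+t^2)^{-q/2}$, so the contribution is at most $(1+t^2)^{-q/2}\int_{1\leq |z|\leq t} |z|^{-n-1}\,dz \lesssim (1+t^2)^{-q/2} \log(4t) \lesssim \log(4t)(1+t^2+x^2)^{-q/2}$ — and here I would want the sharper $(q+1)/2$ power, which I recover by noting $g_t(x) \lesssim (1+t^2+x^2)^{-q/2} \lesssim t^{-1}(1+t^2+x^2)^{-(q-1)/2}$... this is the point that needs care. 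The far piece $|y-x|>t$: here $g_t(y)$ is integrable against $|y-x|^{-n-1}$ with no log, giving $t^{-n-1}\int g_t \lesssim t^{-n-1} \lesssim (1+t^2+x^2)^{-(q+1)/2}$ when $q \leq n$, and trivially otherwise. The main obstacle is exactly extracting the extra half-power of decay (going from exponent $q$ to $q+1$) in the middle annular region \emph{while} keeping only a single logarithm; I expect this requires splitting the annulus $\{1\leq |y-x|\leq t\}$ dyadically and, on each dyadic shell $|y-x|\sim 2^j$, using the improved bound $|g_t(x)-g_t(y)| \lesssim 2^j \|\nabla g_t\|_{L^\infty} \lesssim 2^j (1+t^2)^{-(q+1)/2}$ for small $j$ versus the crude bound $g_t(x)+g_t(y)\lesssim (1+t^2)^{-q/2}$ together with the decay of $g_t(y)$ once $|y| \gtrsim t$ for large $j$, so that the sum over $j$ from $0$ to $\log_2 t$ of $\min(2^{-j}(1+t^2)^{-(q+1)/2}\cdot 2^j,\ \ldots)$ telescopes to $\log(4t)(1+t^2)^{-(q+1)/2}$. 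Once Lemma \ref{Lemma:2.3} is in hand, the proof of Proposition \ref{Proposition:2.1} in the region $\max(1,|x|)\leq t$ follows by applying it with $q = n+1$ and $q = n+2$ to the two terms of $\eta$ and using $(1+t^2+x^2)^{-(n+2)/2}\log(4t) \lesssim (1+t^2+x^2)^{-(n+1)/2}$ absorbed into the stated bound (and the small-$(t,x)$ bound $|(-\Delta)^{1/2}\eta(t,x)| \lesssim (t^2+x^2)^{1/2}$ near the origin comes from $(-\Delta)^{1/2}\eta(0,0)=0$ — guaranteed by the choice of $C_0$ via Lemma \ref{Lemma:1.5} — together with the Lipschitz regularity of $(-\Delta)^{1/2}\eta$).
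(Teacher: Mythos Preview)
Your final strategy---splitting the singular integral for $(-\Delta)^{1/2}g_t(x)$ at the scales $|y-x|=1$ and $|y-x|=t$---is correct and in fact more elementary than the paper's argument. The near piece is handled by second-order Taylor, and the far piece by the crude bound $g_t\le (1+t^2)^{-q/2}$ together with $\int_{|z|>t}|z|^{-n-1}\,dz\sim t^{-1}$ (your stated estimate $t^{-n-1}\int g_t$ fails for $q\le n$ since $g_t\notin L^1$, but the fix is immediate). The logarithm then comes cleanly from the middle annulus via the first-order mean-value bound $|g_t(x)-g_t(y)|\le |x-y|\,\|\nabla g_t\|_{L^\infty}\lesssim |x-y|(1+t^2)^{-(q+1)/2}$, which after integration gives $\int_1^t r^{-1}\,dr\sim\log t$; no dyadic bookkeeping or second bound in your $\min$ is actually needed. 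The paper takes a different route: it uses the symmetric second-difference representation, Taylor-expands to second order over the whole region $|y|<2t$, and extracts the logarithm from a split of the Taylor parameter $\theta\in(0,1)$ at $\theta=|x|/4t$, the piece $\theta\in(|x|/4t,1)$ producing $\int_{|x|/4t}^{1}\theta^{-1}\,d\theta=\log(4t/|x|)$ after the change of variables $z=x+\theta y$. Your approach gains the extra power (from $q$ to $q{+}1$) by one application of the gradient bound on the intermediate region and is shorter; the paper's argument stays closer to the second-order framework of Lemma~\ref{Lemma:2.2} but is heavier. Either way, the single $\log t$ is the price paid for the radial integration across $O(\log t)$ scales between $1$ and $t$.
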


\begin{proof}
Let $f(x) = (1+t^2+x^2)^{-q/2}$.
We note that we have
	\begin{align*}
	(-\Delta)^{1/2} f(x)
	&= C \int_{\mathbb R^n} \frac{f(x+y) - 2f(x) + f(x-y)}{|y|^{n+1}} d y\\
	&= C \int_{|y|>2t} \frac{f(x+y) - 2f(x) + f(x-y)}{|y|^{n+1}} d y\\
	&+ C \int_{|y|<2t} \frac{f(x+y) - 2f(x) + f(x-y)}{|y|^{n+1}} d y.
	\end{align*}
For the representation of the fractional derivative above,
we refer the reader to \cite{NPV2012}.
The first integral on the RHS of the second identity above is estimated by
	\begin{align*}
	&\int_{|y|>2t} \frac{f(x+y) - 2f(x) + f(x-y)}{|y|^{n+1}} d y\\
	&\leq C f(x) \int_{|y|>2t} |y|^{-n-1} d y\\
	&\leq C (1+t^2+x^2)^{-(q+1)/2}.
	\end{align*}
Here we have used the fact that the estimate
	\begin{align}
	t^{-2}
	\leq 3 (1+t^2+x^2)^{-1}
	\label{eq:12}
	\end{align}
holds when $t \geq \max(|x|, 1)$.
So as to estimate the second integral,
we compute it with the Taylor theorem by
	\begin{align}
	& \int_{|y|<2t} \frac{f(x+y) - 2f(x) + f(x-y)}{|y|^{n+1}} d y
	\nonumber\\
	&= C \sum_{|\alpha|=2} \frac{2}{\alpha!} \int_{|y|<2t} \frac{y^\alpha}{|y|^{n+1}} \int_0^1 (1-\theta) \partial_x^\alpha f(x+\theta y) d \theta d y
	\nonumber\\
	&+ C \sum_{|\alpha|=2} \frac{2}{\alpha!} \int_{|y|<2t} \frac{y^\alpha}{|y|^{n+1}} \int_0^1 (1-\theta) \partial_x^\alpha f(x-\theta y) d \theta d y.
	\label{eq:13}
	\end{align}
The first and second integrals on the RHS of \eqref{eq:13} are estimated similarly.
When $|x| < 1$,
the RHS of \eqref{eq:13} is estimated by
	\[
	C (1+t^2)^{-q/2-1} \int_{|y|<2t} |y|^{1-n} dy
	\leq C (1+t^2+x^2)^{-(q+1)/2}.
	\]
When $|x| > 1$,
we divide the first integral on the RHS of \eqref{eq:13} into two cases where $0 < \theta < |x|/4t$ and $|x|/4t < \theta < 1$.
In the first case, since $|x+\theta y| > |x|/2$, we estimate
	\begin{align*}
	&\bigg| \int_{|y|<2t} \frac{y^\alpha}{|y|^{n+1}} \int_0^{|x|/4t} (1-\theta) \partial_x^\alpha f(x+\theta y) d \theta \thinspace d y \bigg|\\
	&\leq C (1+t^2+|x|^2)^{-q/2-1} \frac{|x|}{t} \int_{|y|< 2t} |y|^{1-n} d y \\
	&\leq C (1+t^2+|x|^2)^{-(q+1)/2}.
	\end{align*}
By changing the integral variable as $z=x+\theta y$,
we estimate the second case as
	\begin{align*}
	&\bigg| \int_{|y|<2t} \frac{y^\alpha}{|y|^{n+1}} \int_{|x|/4t}^1 (1-\theta) \partial_x^\alpha f(x+\theta y) d \theta d y \bigg|\\
	&\leq \int_{|x|/4t}^1 \theta^{-1} d \theta \int_{\mathbb R^n} |z-x|^{-n + 1} (1+t^2+|z|^2)^{-q/2-1} d z \\
	&\leq \log(4 t) \int_{\mathbb R^n} |z-x|^{-n + 1} (1+t^2+|z|^2)^{-q/2-1} d z.
	\end{align*}
Furthermore, we estimate
	\begin{align*}
	&\int_{\mathbb R^n} |z-x|^{-n + 1} (1+t^2+|z|^2)^{-q/2-1} d z\\
	&\leq (1+t^2)^{-q/2-1} \int_{|z|<2|x|} |z-x|^{-n + 1} d z
	+ \int_{|z| > 2|x|} |z|^{-n + 1} (1+t^2+|z|^2)^{-q/2-1} d z\\
	&\leq C (1+t^2)^{-q/2-1} |x|
	+ C (1+t^2)^{-q/2-1} \int_{2|x| < r < 2t} d r
	+ C \int_{r > 2t} (1+t+r)^{-q-2} d r\\
	&\leq C (1+t^2+x^2)^{-(q+1)/2},
	\end{align*}
where we have used \eqref{eq:12} again.
\end{proof}

\begin{proof}[Proof of Proposition \ref{Proposition:2.1}]
Lemmas \ref{Lemma:2.2} and \ref{Lemma:2.3} imply that
the estimate
	\[
	|(-\Delta)^{1/2} \eta(t,x)|
	\leq C (1+t^2+x^2)^{-(n+1)/2}
	\]
holds for any $t,x$ satisfying $\max(t,|x|)>1$ with some positive constant $C$ independent of $t$ and $x$.
Then it is sufficient to show \eqref{eq:11} when $\max(t,|x|)<1$.
Lemma \ref{Lemma:1.5} implies that the identity
	\[
	(-\Delta)^{1/2} \eta(0,0) = 0
	\]
holds.
Moreover, it is also seen that we have
	\[
	\| \partial_t (-\Delta)^{1/2} \eta \|_{L^\infty(\mathbb R^{n+1})}
	+ \| \nabla (-\Delta)^{1/2} \eta \|_{L^\infty(\mathbb R^{n+1})}
	< \infty.
	\]
Therefore, the mean value theorem implies that \eqref{eq:10} holds when $t^2+x^2<1$.
\end{proof}

\section{Proof of Theorem \ref{Theorem:1.6}}
We consider the weak form \eqref{eq:2} with the test function
	\[
	\phi_r(t,x) = \rho \bigg( \frac{t}{r+1} \bigg) \eta \bigg( \frac{t}{r+1}, \frac{x}{r+1} \bigg),
	\]
where $\rho : \mathbb R \to [0,1]$ is a smooth decreasing function satisfying that
	\[
	\rho(\tau) = \begin{cases}
	1,
	&\mathrm{if} \quad \tau \leq 1/2,\\
	0,
	&\mathrm{if} \quad \tau \geq 1.
	\end{cases}
	\]
and $\rho'(\tau) \leq C \rho(\tau)^{n/(n+1)}$ for any $\tau \in \mathbb R$.
We remark that such $\rho$ can be constructed by using bump functions.
Here we put
	\[
	\psi_r(t,x)
	= \min\bigg\{ \bigg( \frac{t^2+x^2}{(r+1)^2} \bigg)^{(n+1)/2n} , \bigg( \frac{t^2+x^2}{(r+1)^2} \bigg)^{-1/4n} \bigg\}
	\phi_r(t,x).
	\]
Proposition \ref{Proposition:2.1} implies that the estimates
	\begin{align}
	&| - i \partial_t \phi_r(t,x) + (-\Delta)^{1/2} \phi_r(t,x)|
	\nonumber\\
	&\leq \frac{C}{r+1} \min\bigg\{ \bigg(\frac{t^2+x^2}{(r+1)^2}\bigg)^{1/2}, \bigg( 1 + \frac{t^2+x^2}{(r+1)^2} \bigg)^{-(n+1)/2} \bigg\}
	\rho \bigg( \frac{t}{r+1} \bigg)^{n/(n+1)}
	\nonumber\\
	&\leq \frac{C}{r+1}
	\bigg( 1 + \frac{t^2+x^2}{(r+1)^2} \bigg)^{-(n+1/2)/2(n+1)} \psi_r(t,x)^{n/(n+1)} \chi_r(t)
	\label{eq:14}
	\end{align}
hold, where cut-off function $\chi_r$ is given by
	\[
	\chi_r(t)
	= \begin{cases}
	1 &\mathrm{if} \quad t < r+1,\\
	0 &\mathrm{if} \quad t \geq r+1.
	\end{cases}
	\]
Here we note that the estimates
	\[
	(1-C_0) (1+t^2+x^2)^{-(n+1)/2}
	\leq \eta(t,x)
	\leq (1+t^2+x^2)^{-(n+1)/2}
	\]
follow from \eqref{eq:10}.
By \eqref{eq:2}, we compute
	\begin{align}
	&i \int_{\mathbb R^n} u_0(x) dx + \int_0^T \int_{\mathbb R^n} |u(t,x)|^{(n+1)/n} \phi_r(t,x) dx dt
	\nonumber\\
	&= \int_0^T \int_{\mathbb R^n} u(t,x) \bigg( - i \partial_t \phi_r(t,x) + (-\Delta)^{1/2} \phi_r(t,x) \bigg) dx \thinspace dt.
	\label{eq:15}
	\end{align}
Combining \eqref{eq:14} and the H\"older estimate,
the RHS of \eqref{eq:15} is estimated by
	\begin{align*}
	&\bigg| \int_0^T \int_{\mathbb R^n} u(t,x) \bigg( - i \partial_t \phi_r(t,x) + (-\Delta)^{1/2} \phi_r(t,x) \bigg) dx \thinspace dt \bigg|\\
	&\leq C \frac{1}{r+1} \bigg( \int_0^{r+1} \int_{\mathbb R^n}
		\bigg( 1 + \frac{t^2+x^2}{(r+1)^2} \bigg)^{-(n+1/2)/2} dx \thinspace dt \bigg)^{1/(n+1)}\\
	&\cdot \bigg( \int_0^T \int_{\mathbb R^n} |u(t,x)|^{(n+1)/n} \psi_r(t,x) dx \thinspace dt \bigg)^{n/(n+1)}\\
	&\leq C \bigg( \int_0^T \int_{\mathbb R^n} |u(t,x)|^{(n+1)/n} \psi_r(t,x) dx \thinspace dt \bigg)^{n/(n+1)}.
	\end{align*}
We set
	\begin{align*}
	y(r) &= \int_0^T \int_{\mathbb R^n} |u(t,x)|^{(n+1)/n} \psi_r(t,x) dx \thinspace dt,\\
	Y(R) &= \int_0^R \frac{y(r)}{r+1} dr.
	\end{align*}
The Fubini theorem implies that $Y$ is rewritten by
	\begin{align}
	Y(R)
	= \int_0^T \int_{\mathbb R^n} |u(t,x)|^{(n+1)/n} \int_0^R \frac{\psi_r(t,x)}{r+1} dr.
	\label{eq:16}
	\end{align}
Since $\phi_r(t,x)$ is increasing with respect to $r$ for any fixed $t$ and $x$,
the estimate
	\begin{align}
	\int_0^R \frac{\psi_r(t,x)}{r+1} dr
	\leq C \phi_R(t,x)
	\label{eq:17}
	\end{align}
holds for any $t$ and $x$ with some positive constant $C$ independent of $t$ and $x$.
Indeed, we have
	\begin{align*}
	&\int_0^R \frac{\psi_r(t,x)}{r+1} dr\\
	&\leq \phi_R(t,x)
	\int_0^\infty \min\bigg\{ \bigg( \frac{t^2+x^2}{(r+1)^2} \bigg)^{1/2}, \bigg( \frac{t^2+x^2}{(r+1)^2} \bigg)^{-1/4} \bigg\}
	\frac{1}{r+1} dr\\
	&\leq \phi_R(t,x) \int_0^\infty \min\{ r'^{-1/2}, r'^{-3/2}\} dr'
	\end{align*}
with $r' = (t^2+x^2)^{1/2}/(r+1)$.
Combining \eqref{eq:16} and \eqref{eq:17},
the estimate
	\begin{align}
	Y(R) \leq C \int_0^T \int_{\mathbb R^n} |u(t,x)|^{(n+1)/n} \phi_R(t,x) dr
	\label{eq:18}
	\end{align}
holds.
\eqref{eq:15} and \eqref{eq:18} imply that the estimate
	\[
	\varepsilon + Y(R) \leq C ((R+1)Y'(R))^{n/(n+1)}
	\]
holds with some positive constant $C=C(n,p,f)$ independent of $R$.
Therefore we have
	\begin{align*}
	Y(R)
	\geq \Big( \varepsilon^{-1/n} - C \log(R+1) \Big)^{-n} - \varepsilon.
	\end{align*}
Since the estimate above and $Y(R) < \infty$ hold for any $R \in (0,T_m)$,
we get
	\[
	T_m \leq \exp(C \varepsilon^{-1/n}).
	\]

\section*{Acknowledgment}
The author is supported in part by JSPS Grant-in-Aid for Early-Career Scientists No. 20K14337.



\end{document}